\newtheorem{theorem}{Theorem}
\newtheorem{lemma}{Lemma}
\newtheorem{cor}{Corolary}
\def\E{{\mathbb E}}
\def\P{{\mathbb P}}
\def\Z{{\mathbb Z}}
\def\MIR{{\mbox{MIR}}}
\def\d{{\mbox{d}}}
	\renewcommand\@biblabel[1]{#1.}
\begin{document}

\title{Information theoretic interpretation of frequency domain connectivity  measures}
\date{22nd Nov. 2010}
\author{Daniel Yasumasa Takahashi, Luiz Antonio Baccal\'a, Koichi Sameshima}
\maketitle

\begin{abstract}
To provide adequate multivariate measures of information flow between neural structures, 
modified expressions of Partial Directed Coherence (PDC) and Directed Transfer Function (DTF), two popular multivariate 
connectivity measures employed in neuroscience, are introduced and their formal relationship to
mutual information rates are proved.
\end{abstract}

\section{Introduction}

Over the last decade Neuroscience has  been witnessing an important paradigm shift thanks to the fast advancement of multichannel data acquisition technology. This process has been marked by the growing realization that the brain's inner workings can only be grasped through a detailed description of how brain areas interact functionally in a scenario that has come to be generally referred as the study of brain connectivity  and which stands in sharp contrast to former
longstanding efforts mostly directed at merely identifying which brain areas were involved in specific functions.

As such, many techniques have been proposed to address this problem, specially because
of the need to process and make sense of many simultaneously acquired brain activity signals,  \citep{Kaminski91,Sommer2003,Astolfi2007}. Among the available methods, we introduced
and developed the idea of\ \textit{partial directed coherence} (PDC) \citep{baccalapbr,BaccaSame01} which consists of a means of dissecting the frequency domain relationship between pairs of signals from among a set of $K\geq 2$ simultaneously observed time series.

The main characteristic of PDC is that it decomposes the interaction of each pair of time series within the set
into directional components while deducting the possibly shrouding effect of the remaining $K-2$ series.
It has, for instance, been possible to show that PDC is related to the notion of Gran\-ger causality which corresponds to the ability of pinpointing the level of attainable improvement in predicting a time series $x_i(n)$ when the past of another time series $x_j(n)$ is known ($i\neq j$) \citep{Granger69}. 

In fact, multivariate Granger causality tests as described in \cite{Lutkepohl93} map directly onto statistical tests for PDC nullity. Like Granger causality, and as opposed to ordinary coherence \citep{Priestley81}, PDC is a directional quantity; this fact lead to the idea of 'directed' connectivity that allows one to expressly test for the presence of feedback and to the idea that PDC is somehow associated with the direction of information flow.

The appeal of associating PDC with information flow has been strong; we have used it ourselves \citep{baccalapbr,BaccaSame01}. Yet this suggestion has until now remained vague and to some extent almost apocryphal. The aim of this paper is to correct this state of affairs by making the relationship between PDC and information flow at once formally explicit and precise.

On a par with PDC, is the no less important notion of directed transfer function (DTF) by \cite{Kaminski91}, whose information theoretic interpretation is also addressed here.

By providing further details and full proofs, this paper expands on our previous publication \citep{BuenosAires2010} and is organized as follows: in Sec. \ref{sec:background} we provide some explicit information theoretic background leaving the main result to Sec. \ref{sec:main_result} followed by illustrations and comments in Sec. \ref{sec:illustrations} and \ref{sec:discussion} respectively. Detailed proofs are covered in the Appendix.

\section{BACKGROUND}
\label{sec:background}

The relationship between two discrete time stochastic processes $x=\left\{ x(k) \right\}_{k \in \mathbb{Z}}$ and $y=\left\{y(k) \right\}_{k \in \mathbb{Z}}$ is assessed via their mutual information rate $\MIR(x,y)$ by comparing their joint probability density with the product of their marginals:
{\small
\begin{align}
\label{eq:mutual_info}
&\MIR(x,y)= \notag \\
&\lim_{m \to \infty}\frac{1}{m+1} \E\left[\log \frac{\d\P(x(1), \ldots, x(m),y(1),\ldots, y(m))}{\d\P(x(1), \ldots,x(m))\d\P(y(1),\ldots,y(m))}\right]
\end{align}
}

\noindent
where $\E\left[\cdot \right]$ is the expectation with respect to the joint measure of $x$ and $y$ and where $\d\P$ denotes the appropriate probability density. 
An immediate consequence of \eqref{eq:mutual_info} is that independence between $x$ and $y$ implies $\MIR$ nullity.

The main classic result for jointly Gaussian stationary processes, due to \cite{Gelfand59},
relates \eqref{eq:mutual_info} to the coherence between the processes via
\begin{equation}
\label{eq:gy_coherence}
\MIR(x,y)=-\frac{1}{4\pi}\int^{\pi}_{-\pi}\log (1-|\mbox{C}_{xy}(\omega)|^2)d\omega,
\end{equation}
where the coherence in \eqref{eq:gy_coherence} is given by
\begin{equation}
\mbox{C}_{xy}(\omega)=\frac{S_{xy}(\omega)}{\sqrt{S_{xx}(\omega)S_{yy}(\omega)}},
\end{equation}  
with $S_{xx}(\omega)$ and $S_{yy}(\omega)$ standing for the autospectra and $S_{xy}(\omega)$ for the cross-spectrum, respectively.

The important consequence of this result is that the integrand in \eqref{eq:gy_coherence} may be interpreted as the
frequency decomposition of $\MIR(x,y)$.

In view of this result, the following questions arise: Does a similar result hold for PDC? How and in what sense? 

Before addressing these problems, consider the zero mean wide sense stationary vector process $\mathbf{x}(n)=\left[x_1(n) \dots x_K(n) \right]^T$ representable by multivariate autoregressive model
\begin{equation}
\label{eq:model_law}
\mathbf{x}(n)=\sum_{l=1}^{+\infty}\mathbf{A}(l)\mathbf{x}(n-l)+\mathbf{w}(n),
\end{equation}
where $\mathbf{w}(n)= \left[w_1(n) \dots w_K(n) \right]^T$ stand for zero mean wide sense
stationary innovation processes with positive definite covariance matrix $\boldsymbol{\Sigma}_\mathbf{w}=\E\left[\mathbf{w}(n)\mathbf{w}^T(n)\right]$. 

A sufficient condition for the existence of representation \eqref{eq:model_law} is that the spectral density matrix associated with the process $\left\{\mathbf{x}(n)\right\}_{n \in \Z}$
be uniformly bounded from below and above and be invertible at all frequencies \citep{hannan70}.  From the coefficients $a_{ij}(l)$ of $\mathbf{A}(l)$ we may write
\begin{equation}
\label{eq:acoef}
\bar{A}_{ij}(\omega)=\left\{ 
\begin{array}{l}
1-\sum\limits_{l=1}^{+\infty}a_{ij}(l)e^{-\mathbf{j}\omega l},\;\text{if}\;\;i=j \\ 
-\sum\limits_{l=1}^{+\infty}a_{ij}(l)e^{-\mathbf{j}\omega l},\;\text{otherwise}
\end{array}
\right.  
\end{equation}
where $\mathbf{j}=\sqrt{-1}$ for $\omega \in [-\pi,\pi)$. 

Also let
${\bf \bar{a}}_j(\omega)=\left[\bar{A}_{1j}(\omega) \dots  \bar{A}_{Kj}(\omega) \right]^T$ and
consider the quantity, henceforth termed \textit{information} PDC ($\iota$PDC) from $j$ to $i$,
\begin{equation}
\label{eq:full_PDC}
\iota\pi_{ij}(\omega)= 
\dfrac{\bar{A}_{ij}(\omega)\sigma_{ii}^{-1/2}}{\sqrt{{\bf \bar{a}}_j^H(\omega)%
\boldsymbol{\Sigma}_\mathbf{w}^{-1}{\bf \bar{a}}_j(\omega)}},  
\end{equation}
where $\sigma_{ii}=\E\left[w^2_i(n)\right]$ and which
simplifies to the originally defined PDC when $\boldsymbol{\Sigma}_\mathbf{w}$
equals the identity matrix. Note also that the generalized PDC ($g$PDC) from \cite{Baccala07}
is obtained if $\boldsymbol{\Sigma}_\mathbf{w}$ is a diagonal matrix whose elements are not necessarily the same.

Before stating the main result, note that to our knowledge, mention of \eqref{eq:full_PDC}, as in \cite{Baccala06}, has not appeared in the literature with any explicit association with information theoretic ideas.

\section{RESULTS}
\label{sec:main_result}

\subsection{PDC}
\label{sec:pdc}
\begin{theorem}
\label{theo:main_result}
Let the $K$-variate wide sense stationary time series $\mathbf{x}(n)=[x_1(n) \ldots x_K(n)]^T$  satisfy \eqref{eq:model_law}, then
 \begin{equation} \label{eq:main_result_first}
 \iota\pi_{ij}(\omega) = \mbox{C}_{w_i\eta_j}(\omega),
 \end{equation}
where $\eta_j(n)=x_j(n)-\E[x_j(n)|\{x_l(m), \; l \neq j, \; m\in \Z\}]$ which is known as the partialized process associated to $x_j$  given the remaining time series.
 \end{theorem}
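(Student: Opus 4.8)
The plan is to express the right-hand side coherence $\mbox{C}_{w_i\eta_j}(\omega)$ entirely in terms of the autoregressive coefficients $\bar A_{ij}(\omega)$ and the innovations covariance $\boldsymbol{\Sigma}_\mathbf{w}$, and to check that the resulting expression matches the definition \eqref{eq:full_PDC} of $\iota\pi_{ij}(\omega)$. First I would record the frequency-domain form of the model: writing $\bar{\mathbf{A}}(\omega)=[\bar A_{ij}(\omega)]$, equation \eqref{eq:model_law} becomes $\bar{\mathbf{A}}(\omega)\mathbf{X}(\omega)=\mathbf{W}(\omega)$ in the spectral representation, so that the spectral density matrix of $\mathbf{x}$ is $\mathbf{S_x}(\omega)=\bar{\mathbf{A}}^{-1}(\omega)\boldsymbol{\Sigma}_\mathbf{w}\bar{\mathbf{A}}^{-H}(\omega)$, and its inverse is $\mathbf{S_x}^{-1}(\omega)=\bar{\mathbf{A}}^{H}(\omega)\boldsymbol{\Sigma}_\mathbf{w}^{-1}\bar{\mathbf{A}}(\omega)$. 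The key observation is that the partialized process $\eta_j$ is, by construction, the residual of $x_j$ after linear prediction from all other components, and for stationary processes this residual's spectral quantities are governed by the $j$-th row/column of the inverse spectral density matrix: $S_{\eta_j\eta_j}(\omega)=1/[\mathbf{S_x}^{-1}(\omega)]_{jj}$, a standard partial-spectrum identity I would either cite or derive from the block-matrix inversion formula (the Schur complement).

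Next I would compute the cross-spectrum $S_{w_i\eta_j}(\omega)$. Since $\mathbf{W}(\omega)=\bar{\mathbf{A}}(\omega)\mathbf{X}(\omega)$, the cross-spectrum between $w_i$ and any component of $\mathbf{x}$ is a row of $\bar{\mathbf{A}}(\omega)\mathbf{S_x}(\omega) = \boldsymbol{\Sigma}_\mathbf{w}\bar{\mathbf{A}}^{-H}(\omega)$. The partialized process $\eta_j$ is a specific linear functional of $\mathbf{x}$ — concretely, $\eta_j$ corresponds (in the frequency domain) to the $j$-th row of $\mathbf{S_x}^{-1}(\omega)$ acting on $\mathbf{X}(\omega)$, up to normalization — so $S_{w_i\eta_j}(\omega)$ reduces to the $(i,j)$ entry of $\boldsymbol{\Sigma}_\mathbf{w}\bar{\mathbf{A}}^{-H}(\omega)\,\mathbf{S_x}^{-1}(\omega) = \boldsymbol{\Sigma}_\mathbf{w}\bar{\mathbf{A}}^{-H}(\omega)\bar{\mathbf{A}}^{H}(\omega)\boldsymbol{\Sigma}_\mathbf{w}^{-1}\bar{\mathbf{A}}(\omega)$, which telescopes to $\bar{\mathbf{A}}(\omega)$; hence $S_{w_i\eta_j}(\omega)\propto \bar A_{ij}(\omega)$. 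Similarly $S_{w_iw_i}(\omega)=\sigma_{ii}$. Assembling $\mbox{C}_{w_i\eta_j}=S_{w_i\eta_j}/\sqrt{S_{w_iw_i}S_{\eta_j\eta_j}}$ and using $[\mathbf{S_x}^{-1}(\omega)]_{jj}=\bar{\mathbf{a}}_j^H(\omega)\boldsymbol{\Sigma}_\mathbf{w}^{-1}\bar{\mathbf{a}}_j(\omega)$ should reproduce \eqref{eq:full_PDC} exactly, including the $\sigma_{ii}^{-1/2}$ factor.

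I would carry out the steps in this order: (1) pass \eqref{eq:model_law} to the spectral representation and record $\mathbf{S_x}$ and $\mathbf{S_x}^{-1}$; (2) establish the identity $[\mathbf{S_x}^{-1}(\omega)]_{jj}=1/S_{\eta_j\eta_j}(\omega)$ together with the fact that the innovation of the best linear predictor of $x_j$ from the others has spectral representation given by the $j$-th row of $\mathbf{S_x}^{-1}$; (3) compute $S_{w_i\eta_j}$ via the telescoping product above; (4) substitute into the coherence formula and simplify. The main obstacle I anticipate is step (2): making rigorous the claim that the (generally infinite-order, non-causal) Wiener filter defining $\eta_j(n)=x_j(n)-\E[x_j(n)\mid\{x_l(m),l\neq j\}]$ has the asserted frequency-domain description, and that all the cross-spectral manipulations are justified under the spectral assumptions quoted after \eqref{eq:model_law} (uniform boundedness and invertibility of $\mathbf{S_x}(\omega)$). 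This requires care with the orthogonality characterization of the conditional expectation in $L^2$ and with interchanging the projection with the spectral integral; the algebra of steps (1), (3), (4) is then routine linear algebra. I would relegate the careful justification of (2) to the Appendix.
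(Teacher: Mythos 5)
Your proposal is correct and follows essentially the same route as the paper's proof: both rest on the factorization $\mathbf{S}_{\mathbf{x}}^{-1}(\omega)=\bar{\mathbf{A}}^{H}(\omega)\boldsymbol{\Sigma}_\mathbf{w}^{-1}\bar{\mathbf{A}}(\omega)$, the partial-spectrum identity $S_{\eta_j\eta_j}(\omega)=1/[\mathbf{S}_{\mathbf{x}}^{-1}(\omega)]_{jj}$ (the paper's Lemma), and the observation that $S_{w_i\eta_j}(\omega)=\bar{A}_{ij}(\omega)S_{\eta_j\eta_j}(\omega)$. The only cosmetic difference is that you obtain the last identity by the matrix telescoping $\boldsymbol{\Sigma}_\mathbf{w}\bar{\mathbf{A}}^{-H}\mathbf{S}_{\mathbf{x}}^{-1}=\bar{\mathbf{A}}$, whereas the paper expands $w_i=\sum_l \bar{A}_{il}x_l$ and invokes the orthogonality of $\eta_j$ to $\mathbf{x}^j$ term by term --- the same argument in different clothing; just be sure to track the proportionality constant $S_{\eta_j\eta_j}(\omega)$ hidden in your ``up to normalization'' step, since it is needed to land exactly on \eqref{eq:full_PDC}.
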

 
 \begin{cor} \label{cor:main}
Let  the $K$-variate Gaussian stationary time series $\mathbf{x}(n)=[x_1(n) \ldots x_K(n)]^T$  satisfy \eqref{eq:model_law}, then
 \begin{equation}
 \label{eq:main_result}
 \MIR(w_i, \eta_j)=-\frac{1}{4\pi}\int^{\pi}_{-\pi}\log (1-|\iota\pi_{ij}(\omega)|^2)d\omega.
 \end{equation}
 \end{cor}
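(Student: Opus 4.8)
The plan is to obtain Corollary~\ref{cor:main} as an essentially immediate consequence of Theorem~\ref{theo:main_result} together with the Gelfand--Yaglom identity \eqref{eq:gy_coherence}. Applying \eqref{eq:gy_coherence} to the bivariate process $(w_i,\eta_j)$ gives
\[
\MIR(w_i,\eta_j)=-\frac{1}{4\pi}\int_{-\pi}^{\pi}\log\bigl(1-|\mbox{C}_{w_i\eta_j}(\omega)|^2\bigr)\,d\omega,
\]
and Theorem~\ref{theo:main_result} identifies $\mbox{C}_{w_i\eta_j}(\omega)$ with $\iota\pi_{ij}(\omega)$, which yields \eqref{eq:main_result} after substitution. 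The only real work is to check that the hypotheses under which \eqref{eq:gy_coherence} was stated --- joint Gaussianity and wide-sense stationarity of the pair, together with enough spectral regularity for the integral to be meaningful --- are indeed met by $(w_i,\eta_j)$.

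First I would verify joint Gaussianity and stationarity. By assumption $\mathbf{x}$ is Gaussian and wide-sense (hence strictly) stationary; by \eqref{eq:model_law} each $w_i(n)$ is an $L^2$-limit of finite linear combinations of the $\{x_l(m)\}$, and by the Gaussian projection theorem $\E[x_j(n)\,|\,\{x_l(m):l\neq j,\,m\in\Z\}]$ is likewise an $L^2$-limit of finite linear combinations of $\{x_l(m):l\neq j\}$, so $\eta_j(n)$ is too. Since linear operations and $L^2$-limits preserve joint Gaussianity and commute with the shift operator, $(w_i,\eta_j)$ is a jointly Gaussian, jointly stationary bivariate process, exactly as required by \eqref{eq:gy_coherence}.

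Next I would record the spectral regularity. Since the $\mathbf{w}(n)$ are the innovations of \eqref{eq:model_law}, $w_i$ is white with autospectrum a positive constant; and $\eta_j$ has autospectrum equal, up to the obvious constant, to the reciprocal of the $(j,j)$ entry of the inverse of the spectral density matrix of $\mathbf{x}$, which by the standing assumption that this matrix is bounded above and below and invertible at every frequency is itself bounded above and below. Hence $S_{w_iw_i}$ and $S_{\eta_j\eta_j}$ are bounded away from $0$ and $\infty$, the cross-spectrum $S_{w_i\eta_j}$ and the coherence $\mbox{C}_{w_i\eta_j}$ are well defined, $|\mbox{C}_{w_i\eta_j}(\omega)|^2\le 1$, and the integral in \eqref{eq:gy_coherence} is meaningful with value in $[0,+\infty]$ (the value $+\infty$ matching an infinite $\MIR$). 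With these checks in place, applying \eqref{eq:gy_coherence} to $(w_i,\eta_j)$ and substituting $\mbox{C}_{w_i\eta_j}(\omega)=\iota\pi_{ij}(\omega)$ from Theorem~\ref{theo:main_result} completes the proof.

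The main obstacle is not computational but precisely this verification that $\eta_j$ is a legitimate stationary Gaussian process jointly with $w_i$ and that its spectrum is nondegenerate, so that the classical Gelfand--Yaglom result genuinely applies --- this is where the hypothesis on the spectral density matrix in \eqref{eq:model_law} is used. One should also make sure that the normalized cross-spectral quantity $\mbox{C}_{w_i\eta_j}$ furnished by Theorem~\ref{theo:main_result} is the very coherence entering \eqref{eq:gy_coherence}, i.e.\ defined as in \eqref{eq:gy_coherence}; this is immediate from the definitions but worth stating explicitly. Once that is secured, the corollary is just Theorem~\ref{theo:main_result} read through \eqref{eq:gy_coherence}.
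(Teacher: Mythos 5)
Your proof is correct and follows essentially the same route as the paper: Corollary~\ref{cor:main} is obtained by combining Theorem~\ref{theo:main_result} with the Gelfand--Yaglom identity \eqref{eq:gy_coherence} applied to the pair $(w_i,\eta_j)$. Your additional verification that $(w_i,\eta_j)$ is jointly Gaussian, jointly stationary, and spectrally nondegenerate is a welcome level of care that the paper leaves implicit, but it does not change the argument.
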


\smallskip
\indent
To obtain the process $\eta_k$, remember that it constitutes the residue of the projection of $x_k$ onto the past, the future and the present of the remaining processes. Hence its autospectrum  is given by
\begin{equation}
 S_{\eta_k\eta_k}(\omega)=S_{x_kx_k}(\omega)
-\mathbf{s}_{x_k\mathbf{x}^k}(\omega)\mathbf{S}^{-1}_{\mathbf{x}^k\mathbf{x}^k}(\omega)\mathbf{s}_{\mathbf{x}^kx_k}(\omega),
 \end{equation}
 for $\mathbf{x}^k=[x_{l_1} \ldots x_{l_{K-1}}]^T$, $\{l_1,\ldots,l_{K-1}\}=\{1,\ldots,K\}\setminus \{k\}$ where
 $\mathbf{s}_{x_k \mathbf{x}^k}(\omega)$ is the $K-1$-dimensional vector whose entries are the cross spectra between $x_k$ and the remaining $K-1$ processes, whereas $\mathbf{S}_{\mathbf{x}^k\mathbf{x}^k}(\omega)$ is the spectral density matrix of $\mathbf{x}^k$. The spectrum $ S_{\eta_k\eta_k}(\omega)$ is also known in the literature as the partial spectrum of $x_k$ given $\mathbf{x}^k$ \citep{Priestley81}.

Note that 
\begin{equation}
\label{eq:wienerfilter}
\mathbf{g}_k(\omega)=\mathbf{s}_{x_k \mathbf{x}^k}(\omega)\mathbf{S}^{-1}_{\mathbf{x}^k\mathbf{x}^k}(\omega)
\end{equation}
constitutes an optimum Wiener filter whose role in producing $\eta_k$ is to deduct the influence of the other variables from $x_k$ to single out that contribution that is only its own.

Theorem \ref{theo:main_result} shows that PDC from $x_j$ to $x_i$ measures the amount of information common to the $\eta_j$ partial process  and the $w_i$ innovation. The proof is left to the Appendix but its main idea is to prove (\ref{eq:main_result_first}) so that (\ref{eq:main_result}) follows by use of (\ref{eq:gy_coherence}) to produce $\MIR(w_i,\eta_j)$.

\subsection{DTF}
\label{sec:dtf}

Every stationary process $\{\mathbf{x}(n)\}_{n \in \Z}$  with autoregressive representation (\ref{eq:model_law}) also has the following moving average representation
\begin{equation}
\label{eq:model_law2}
\mathbf{x}(n)=\sum_{l=0}^{+\infty}\mathbf{H}(l)\mathbf{w}(n-l),
\end{equation}
where the innovation process $\mathbf{w}$  is the same as that of (\ref{eq:model_law}).       

In connection to the  $h_{ij}(l)$ coefficients of $\mathbf{H}(l)$, consider the matrix
$\bar{\mathbf{H}}(\omega)$ with entries
\begin{equation}
\label{eq:mcoef}
\bar{H}_{ij}(\omega)=\sum\limits_{l=0}^{+\infty}h_{ij}(l)e^{-\mathbf{j}\omega l},
\end{equation}
and let $\bar{\mathbf{h}}_j(\omega)=\left[\bar{H}_{j1}(\omega) \dots  \bar{H}_{jK}(\omega) \right]^T$ whence follows
the definition of \textit{information} directed transfer function ($\iota$DTF) from $j$ to $i$ as
\begin{equation}
\label{eq:full_DTF}
\iota\gamma_{ij}(\omega)= 
\dfrac{\bar{H}_{ij}(\omega)\rho_{jj}^{1/2}}{\sqrt{{\bf \bar{h}}^H_j(\omega)%
\boldsymbol{\Sigma}_\mathbf{w}{\bf \bar{h}}_j(\omega)}},  
\end{equation}
where $\rho_{jj}$ is the variance of the partialized innovation process $\zeta_j(n)=w_j(n)-\E[w_j(n)/\{w_l(n), \; l \neq j\}]$ given explicitly by
\begin{equation*}
\rho_{jj}=\sigma_{jj}-\mathbf{\sigma}_{j\cdot}\boldsymbol{\Sigma}_{\cdot\cdot}^{-1}\mathbf{\sigma}_{j\cdot}^T,
\end{equation*}
where $\mathbf{\sigma}_{j\cdot}$ is the vector of covariances for innnovations \\
$\mathbf{w}^j(n)=[w_{l_1}(n) \ldots w_{l_{K-1}}(n)]^T$ where $\{l_{1}, \ldots, l_{K-1}\}=\{1,$ $\ldots, K\} \setminus \{j\}$ and $\boldsymbol{\Sigma}_{\cdot\cdot}$ is the covariance matrix of $\mathbf{w}^j(n)$.
 
When $\boldsymbol{\Sigma}_{\mathbf{w}}$ is the identity matrix, \eqref{eq:full_DTF} reduces to the original DTF from \cite{Kaminski91}. Also when $\boldsymbol{\Sigma}_{\mathbf{w}}$ is a diagonal matrix with distinct elements \eqref{eq:full_DTF}
reduces to directed coherence as defined in \cite{Baccala99}.

For this new quantity, a result analogous to Theorem \ref{theo:main_result} holds.

\begin{theorem}
\label{theo:main_result_DTF}
 Let  the $K$-variate wide sense stationary time series $\mathbf{x}(n)=[x_1(n) \ldots x_K(n)]^T$  satisfy \eqref{eq:model_law2},
 then
 \begin{equation} \label{eq:main_result_first_DTF}
 \iota\gamma_{ij}(\omega) = \mbox{C}_{x_i\zeta_j}(\omega),
 \end{equation}
where $\zeta_j$ is the previously defined partialized innovation process.
\end{theorem}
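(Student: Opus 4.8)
The plan is to mirror the proof of Theorem \ref{theo:main_result}, the only change being that the Wiener projection of $x_j$ onto the other processes is replaced by the much simpler \emph{instantaneous} projection of $w_j$ onto the other innovations. First I would use that $\mathbf{w}$ is a white innovation sequence, so its spectral density matrix is the constant $\tfrac{1}{2\pi}\boldsymbol{\Sigma}_\mathbf{w}$ and, by the moving average representation \eqref{eq:model_law2}, $\mathbf{S}_\mathbf{x}(\omega)=\tfrac{1}{2\pi}\bar{\mathbf{H}}(\omega)\boldsymbol{\Sigma}_\mathbf{w}\bar{\mathbf{H}}^H(\omega)$; in particular $S_{x_ix_i}(\omega)=\tfrac{1}{2\pi}\bar{\mathbf{h}}_i^H(\omega)\boldsymbol{\Sigma}_\mathbf{w}\bar{\mathbf{h}}_i(\omega)$, i.e. $2\pi S_{x_ix_i}(\omega)$ is the quadratic form in the denominator of \eqref{eq:full_DTF}. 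Throughout, $\E[\cdot|\cdot]$ is read as the $L^2$ orthogonal projection, so the argument works for wide sense stationary (not necessarily Gaussian) $\mathbf{x}$, exactly as in Theorem \ref{theo:main_result}.

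Next I would write $\zeta_j(n)=\mathbf{c}_j^T\mathbf{w}(n)$, where $\mathbf{c}_j$ is the real coefficient vector with $j$-th entry $1$ and remaining entries $-\boldsymbol{\Sigma}_{\cdot\cdot}^{-1}\boldsymbol{\sigma}_{j\cdot}^T$. The key algebraic identity, obtained directly from the normal equations of the projection (namely $\E[\zeta_j(n)w_l(n)]=0$ for $l\neq j$ and $\E[\zeta_j(n)w_j(n)]=\E[\zeta_j(n)^2]=\rho_{jj}$), is
\begin{equation*}
\boldsymbol{\Sigma}_\mathbf{w}\,\mathbf{c}_j=\rho_{jj}\,\mathbf{e}_j ,
\end{equation*}
with $\mathbf{e}_j$ the $j$-th canonical basis vector; equivalently $\mathbf{c}_j$ is proportional to the $j$-th column of $\boldsymbol{\Sigma}_\mathbf{w}^{-1}$ and $\rho_{jj}=1/[\boldsymbol{\Sigma}_\mathbf{w}^{-1}]_{jj}$. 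This is the DTF analogue of the role played by the Wiener filter \eqref{eq:wienerfilter} in Theorem \ref{theo:main_result}.

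Since $\mathbf{w}$ is white and $\zeta_j(n)$ depends on $\mathbf{w}(n)$ only, the cross-covariance $\E[x_i(n+\tau)\zeta_j(n)]$ vanishes for $\tau<0$, and for $\tau\ge 0$ the double sum over innovation indices collapses, via the identity above, to $\rho_{jj}\,h_{ij}(\tau)$; likewise $\E[\zeta_j(n+\tau)\zeta_j(n)]=\rho_{jj}\,\delta_{\tau,0}$ and $\E[\zeta_j(n)^2]=\rho_{jj}$. Transforming gives the three ingredients $S_{x_i\zeta_j}(\omega)=\tfrac{\rho_{jj}}{2\pi}\bar{H}_{ij}(\omega)$, $S_{\zeta_j\zeta_j}(\omega)=\tfrac{\rho_{jj}}{2\pi}$, together with the $S_{x_ix_i}$ already noted. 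Substituting into $\mbox{C}_{x_i\zeta_j}(\omega)=S_{x_i\zeta_j}(\omega)/\sqrt{S_{x_ix_i}(\omega)S_{\zeta_j\zeta_j}(\omega)}$, the two factors $\rho_{jj}$ and the three factors $2\pi$ cancel down to a single $\rho_{jj}^{1/2}$ in the numerator and reproduce \eqref{eq:full_DTF}, which proves \eqref{eq:main_result_first_DTF}. As in Corollary \ref{cor:main}, combining this with \eqref{eq:gy_coherence} then yields $\MIR(x_i,\zeta_j)=-\tfrac{1}{4\pi}\int_{-\pi}^{\pi}\log(1-|\iota\gamma_{ij}(\omega)|^2)\,d\omega$ in the Gaussian case.

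The step I expect to require the most care is precisely this whiteness/causality bookkeeping for the cross-spectrum: one must remember that $\zeta_j$ is not $w_j$ but its partialized version, so it is the full identity $\boldsymbol{\Sigma}_\mathbf{w}\mathbf{c}_j=\rho_{jj}\mathbf{e}_j$ — not merely the orthogonality of $\zeta_j$ to the other innovation channels — that forces $\E[x_i(n+\tau)\zeta_j(n)]$ to reduce to the single coefficient sequence $h_{ij}(\tau)$; and one must keep the Hermitian-versus-transpose and sign-of-exponent conventions aligned with \eqref{eq:mcoef} so that the complex identity \eqref{eq:main_result_first_DTF} emerges, and not merely its modulus or its conjugate.
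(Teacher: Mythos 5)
Your proof is correct and takes essentially the same route as the paper's: evaluate $S_{\zeta_j\zeta_j}=\rho_{jj}$, obtain $S_{x_ix_i}$ as the quadratic form $\bar{\mathbf{h}}^H\boldsymbol{\Sigma}_\mathbf{w}\bar{\mathbf{h}}$ from the moving average representation, and collapse the cross-spectrum $S_{x_i\zeta_j}$ using the orthogonality of $\zeta_j$ to the other innovation channels. If anything you are more careful than the printed argument: your identity $\boldsymbol{\Sigma}_\mathbf{w}\mathbf{c}_j=\rho_{jj}\mathbf{e}_j$ correctly yields $S_{x_i\zeta_j}(\omega)\propto\rho_{jj}\,\bar{H}_{ij}(\omega)$, which is the factor needed for the $\rho_{jj}^{1/2}$ in \eqref{eq:full_DTF} to emerge, whereas the paper's displayed $S_{x_i\zeta_j}(\omega)=\bar{H}_{ij}(\omega)$ drops that factor.
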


\begin{cor} \label{cor:mainDTF}
 Let  the $K$-variate Gaussian stationary time series $\mathbf{x}(n)=[x_1(n) \ldots x_K(n)]^T$  satisfy \eqref{eq:model_law2},
 then
 \begin{equation}
 \label{eq:main_result_DTF}
 \MIR(x_i, \zeta_j)=-\frac{1}{4\pi}\int^{\pi}_{-\pi}\log (1-|\iota\gamma_{ij}(\omega)|^2)d\omega.
 \end{equation}
\end{cor}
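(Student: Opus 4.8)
The plan is to obtain Corollary~\ref{cor:mainDTF} from Theorem~\ref{theo:main_result_DTF} exactly as Corollary~\ref{cor:main} is obtained from Theorem~\ref{theo:main_result}: once the coherence identity \eqref{eq:main_result_first_DTF} is in hand, observe that $\zeta_j(n)=w_j(n)-\boldsymbol{\sigma}_{j\cdot}\boldsymbol{\Sigma}_{\cdot\cdot}^{-1}\mathbf{w}^j(n)$ is a time-invariant linear functional of the innovations and that $x_i(m)$ is an $L^2$-limit of linear functionals of the same innovations, so under the Gaussian hypothesis the bivariate process $(x_i,\zeta_j)$ is jointly Gaussian and stationary; the result \eqref{eq:gy_coherence} then applies with $\mbox{C}_{xy}$ replaced by $\mbox{C}_{x_i\zeta_j}$ and yields \eqref{eq:main_result_DTF}. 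Hence the substance lies entirely in Theorem~\ref{theo:main_result_DTF}, and because coherence is a second-order quantity it suffices to prove \eqref{eq:main_result_first_DTF} for merely wide-sense stationary $\mathbf{x}$; in the non-Gaussian case the conditional expectation defining $\zeta_j$ is simply read as a linear projection, and the orthogonality relations used below are its normal equations.

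The key structural fact is that $\zeta_j$ is white. Being a fixed linear combination of the innovation vector $\mathbf{w}(n)$, and $\{\mathbf{w}(n)\}$ being serially uncorrelated (it is the innovation process of \eqref{eq:model_law}), one gets $\E[\zeta_j(n)\zeta_j(m)]=\rho_{jj}\,\delta_{nm}$ with $\rho_{jj}=\Var(\zeta_j(n))=\sigma_{jj}-\boldsymbol{\sigma}_{j\cdot}\boldsymbol{\Sigma}_{\cdot\cdot}^{-1}\boldsymbol{\sigma}_{j\cdot}^T$. Moreover $\zeta_j(n)$ is, by construction, orthogonal to every $w_l(n)$ with $l\neq j$, and (being white) to every $w_l(m)$ with $m\neq n$, while $\E[\zeta_j(n)w_j(n)]=\E[\zeta_j(n)^2]=\rho_{jj}$. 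In other words the entire cross-covariance sequence between $\mathbf{w}$ and $\zeta_j$ collapses to a single lag-zero spike of height $\rho_{jj}$ in the $j$-th coordinate.

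With this in place I would compute the three spectra entering $\mbox{C}_{x_i\zeta_j}(\omega)=S_{x_i\zeta_j}(\omega)\big/\sqrt{S_{x_ix_i}(\omega)\,S_{\zeta_j\zeta_j}(\omega)}$ directly from the moving-average representation \eqref{eq:model_law2}, $x_i(n)=\sum_{l\ge 0}\sum_k h_{ik}(l)w_k(n-l)$. By whiteness $S_{\zeta_j\zeta_j}(\omega)=(2\pi)^{-1}\rho_{jj}$ is flat. Using the spike property, $\E[x_i(n+\tau)\zeta_j(n)]=\rho_{jj}h_{ij}(\tau)$ for $\tau\ge 0$ and $0$ for $\tau<0$, so $S_{x_i\zeta_j}(\omega)$ is proportional to $\bar{H}_{ij}(\omega)$ of \eqref{eq:mcoef} with constant $(2\pi)^{-1}\rho_{jj}$. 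Finally \eqref{eq:model_law2} gives the spectral density matrix $\mathbf{S}_\mathbf{x}(\omega)=(2\pi)^{-1}\bar{\mathbf{H}}(\omega)\boldsymbol{\Sigma}_\mathbf{w}\bar{\mathbf{H}}^{H}(\omega)$, whose $(i,i)$ entry $S_{x_ix_i}(\omega)$ is, up to the same factor $(2\pi)^{-1}$, precisely the quadratic form under the square root in \eqref{eq:full_DTF}. Substituting the three into the coherence, the $(2\pi)^{-1}$'s cancel and one power of $\rho_{jj}^{1/2}$ is left in the numerator, reproducing $\iota\gamma_{ij}(\omega)$ exactly; this is \eqref{eq:main_result_first_DTF}, and \eqref{eq:main_result_DTF} then follows via \eqref{eq:gy_coherence}.

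I expect the main obstacle to be analytic rather than algebraic: making the manipulations of the infinite moving-average series rigorous — interchanging summation with expectation when forming covariances, and summing the resulting series to the closed form $\bar{\mathbf{H}}\boldsymbol{\Sigma}_\mathbf{w}\bar{\mathbf{H}}^{H}$ — which I would control through the regularity hypothesis of \citep{hannan70} (spectral density bounded above and below and invertible at all frequencies), which supplies the absolute summability needed. It is also worth contrasting the situation with Theorem~\ref{theo:main_result}: there the partialized process $\eta_j$ requires the two-sided Wiener filter \eqref{eq:wienerfilter} because it deducts the whole past, present and future of the other series, whereas here $\zeta_j$ needs only an instantaneous projection since $\mathbf{w}$ is white — and it is exactly this whiteness that makes $S_{x_i\zeta_j}$ collapse to a single harmonic $\propto \bar{H}_{ij}(\omega)$, so the DTF argument is structurally lighter than the PDC one.
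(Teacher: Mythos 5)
Your proposal is correct and follows essentially the same route as the paper: establish the coherence identity \eqref{eq:main_result_first_DTF} from the moving-average representation together with the whiteness of $\zeta_j$ and its orthogonality to the $w_l$, $l\neq j$, and then invoke the Gelfand--Yaglom formula \eqref{eq:gy_coherence} for the jointly Gaussian stationary pair $(x_i,\zeta_j)$. Your version is in fact slightly more careful than the paper's on two points it leaves implicit: the justification that $(x_i,\zeta_j)$ is jointly Gaussian and stationary, and the factor $\rho_{jj}$ in $S_{x_i\zeta_j}(\omega)=\rho_{jj}\bar{H}_{ij}(\omega)$, which the paper's appendix drops but which is needed for the normalization in \eqref{eq:full_DTF} to come out right.
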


An important remark is that \eqref{eq:main_result_first}/\eqref{eq:main_result_first_DTF} hold for  wide-sense stationary processes
respectively with a autoregressive/moving average representations and that the gaussianity requirement is unnecessary for their validity.

Also the integrands in \eqref{eq:main_result} and \eqref{eq:main_result_DTF} are readily interpretable as
mutual information rates at each frequency.

\section{ILLUSTRATIVE EXAMPLE}
\label{sec:illustrations} 

Via the following simple accretive example it is possible to explicitly expose the nature of (\ref{eq:main_result_first}):
\begin{equation}
\label{eq:model_2}
\begin{bmatrix} x_1(n) \\ x_2(n) \end{bmatrix}=\begin{bmatrix} 0 & 0 \\ \alpha & 0 \end{bmatrix}\begin{bmatrix} x_1(n-1) \\ x_2(n-1) \end{bmatrix}+ \begin{bmatrix} w_1(n) \\ w_2(n) \end{bmatrix},
\end{equation}
where $\E[w_i(n)w_j(m)]=\delta_{nm}\delta_{ij}$, for $m,n \in \Z$ and $i,j \in \{1,2\}$ with $\delta_{pq}$ standing for the usual Kronecker delta symbol.

Clearly $\iota\pi_{12}(\omega)=0 $ and 
$$\iota\pi_{21}(\omega)=\dfrac{-\alpha e^{-\mathbf{j}\omega}}{\sqrt{1+\alpha^2}}.  $$

To obtain  $\mbox{C}_{w_1\eta_2}(\omega)$ using the fact that $$\mathbf{s}_{21}(\omega)\mathbf{S}^{-1}_{11}(\omega)=\alpha e^{-\mathbf{j}\omega}$$
implies $\eta_2(n)=x_2(n)-\alpha x_1(n-1) =w_2(n)$  so that
$\mbox{C}_{w_1\eta_2}(\omega)=0$, and hence $\iota\pi_{12}(\omega)=\mbox{C}_{w_1\eta_2}(\omega)$. 

Now to compute $\mbox{C}_{w_2\eta_1}(\omega)$ one must use the spectral density matrix of $[x_1\;\; x_2]^T$ given by
\begin{equation*}
\begin{bmatrix} S_{x_1x_1}(\omega) & S_{x_1x_2}(\omega) \\ S_{x_2x_1}(\omega) & S_{x_2x_2}(\omega)\end{bmatrix}=\begin{bmatrix} 1 & \alpha e^{\mathbf{j}\omega} \\ \alpha e^{-\mathbf{j}\omega}  & 1+\alpha^2 \end{bmatrix},
\end{equation*}
leading to the optimum filter $$\mathbf{G}_1(\omega)=\mathbf{s}_{12}(\omega)\mathbf{S}^{-1}_{22}(\omega)=\frac{\alpha}{1+\alpha^2}e^{\mathbf{j}\omega}$$ for $\E[x_1(n)/\{x_2(m), \; m \in \Z\}]$. It is noncausal and produces $\frac{\alpha}{1+\alpha^2}x_2(n+1)$ so that
\begin{equation*}
\eta_1(n)=x_1(n)-\frac{\alpha}{1+\alpha^2}x_2(n+1).
\end{equation*}

\noindent
Since $x_1(n)=w_1(n)$ and $x_2(n)=\alpha w_1(n-1)+w_2(n)$,
\begin{equation*}
\eta_1(n)=w_1(n)\frac{1}{1+\alpha^2}-w_2(n+1)\frac{\alpha}{1+\alpha^2},
\end{equation*}
which leads to
\begin{equation*}
S_{w_2\eta_1}(\omega)=\frac{-\alpha e^{-\mathbf{j}\omega}}{1+\alpha^2}
\end{equation*}
and
\begin{align*}
S_{\eta_1}(\omega)&=\frac{1}{1+\alpha^2}\\
S_{w_2}(\omega)&=1,
\end{align*}
showing that
\begin{equation*}
C_{w_2\eta_1}(\omega)=\frac{-\alpha e^{-\mathbf{j}\omega}}{\sqrt{1+\alpha^2}} 
\end{equation*}
confirms that $\iota\pi_{21}(\omega)=C_{w_2\eta_1}(\omega)$ via direct computation
of the Fourier transforms of the covariance/cross-covariance functions involving $w_2$ and $\eta_1$.

It is easy to verify that $\zeta_i(n)=w_i(n)$ so that direct computations
also confirm $\iota$PDC and $\iota$DTF equality in the $K=2$ case \citep{BaccaSame01} when 
$\boldsymbol{\Sigma}$ is the identity matrix.

Let model \eqref{eq:model_2} be enlarged by including a third observed variable
\begin{equation}
\label{eq:extra}
x_3(n)=\beta x_2(n-1)+w_3(n)
\end{equation}
where $w_3(n)$ is zero mean unit variance Gaussian and orthogonal to $w_1(n)$ and $w_2(n)$ for all lags.
This new equation means that the signal $x_1$ has an indirect path to $x_3$ via $x_2$
but no direct means of reaching $x_3$.

For this augmented model, the following joint moving average representation holds
\begin{align*}
&\begin{bmatrix} x_1(n) \\ x_2(n) \\ x_3(n)\end{bmatrix}=\begin{bmatrix} w_1(n) \\ w_2(n) \\ w_3(n) \end{bmatrix}+\begin{bmatrix} 0 & 0  & 0\\ \alpha & 0 & 0\\ 0 & \beta & 0 \end{bmatrix}\begin{bmatrix} w_1(n-1) \\ w_2(n-1) \\ w_3(n-1) \end{bmatrix}\\
&+\begin{bmatrix} 0 & 0  & 0\\ 0 & 0 & 0\\ \alpha \beta & 0 & 0 \end{bmatrix}\begin{bmatrix} w_1(n-2) \\ w_2(n-2) \\ w_3(n-2) \end{bmatrix},
\end{align*}
which produces
\begin{align}
\iota\gamma_{21}(\omega)&=\frac{\alpha e^{-\mathbf{j}\omega} }{\sqrt{1+\alpha^2}},\nonumber\\
\iota\gamma_{32}(\omega)&=\frac{\beta e^{-\mathbf{j}\omega} }{\sqrt{1+\beta^2+\alpha^2\beta^2}},\nonumber\\
\iota\gamma_{31}(\omega)&=\frac{\alpha \beta e^{-2\mathbf{j}\omega} }{\sqrt{1+\beta^2+\alpha^2\beta^2}},\label{DTF_31}\\
\nonumber
\end{align}
and $\iota\gamma_{kl}=0$ for $l > k$ by direct computation using \eqref{eq:full_DTF}. To verify
\eqref{eq:main_result_first_DTF}, one obtains $\zeta_i=w_i$ since the $w_i$ innovations are uncorrelated leading to
\begin{align*}
&S_{x_2\zeta_1}(\omega)=\alpha,\;\;\;\;\;S_{x_3\zeta_2}(\omega)=\beta e^{-\mathbf{j}\omega},\\
&S_{x_3\zeta_1}(\omega)=\alpha \beta e^{-2\mathbf{j}\omega},\;\;\;\; S_{x_2x_2}(\omega)=1+\alpha^2,\\
&S_{x_3x_3}(\omega)=1+\beta^2+\alpha^2\beta^2,\;\;S_{\zeta_1\zeta_1}(\omega)=1=S_{\zeta_2\zeta_2}(\omega),\\
\end{align*}
wherefrom $\iota\gamma_{21}(\omega)=\mbox{C}_{x_2\zeta_1}(\omega)$, $\iota\gamma_{32}(\omega)=\mbox{C}_{x_3\zeta_2}(\omega)$ and $\iota\gamma_{31}(\omega)=\mbox{C}_{x_3\zeta_1}(\omega)$ using \eqref{eq:main_result_first_DTF}.

One may compute this model's PDCs
\begin{align*}
\iota\pi_{21}(\omega)&=\frac{-\alpha e^{-\mathbf{j}\omega} }{\sqrt{1+\alpha^2}},\\
\iota\pi_{32}(\omega)&=\frac{-\beta e^{-\mathbf{j}\omega} }{\sqrt{1+\beta^2}},\\
\iota\pi_{31}(\omega)&=0,
\end{align*}
either via \eqref{eq:full_PDC}, or via Theorem \ref{theo:main_result}.

This exposes the fact that the augmented model's
direct interaction is represented by PDC whereas DTF from $x_1$ to $x_3$  \eqref{DTF_31}
is zero if either $\alpha$ or $\beta$ is zero. This means that a signal pathway leaving $x_1$ reaches $x_3$ so that DTF therefore represents the net directed effect of $x_1$ onto $x_3$ as in fact previously noted in \cite{baccalapbr}.

\section{Discussion}
\label{sec:discussion}

In their \textit{information} forms both PDC and DTF represent true coherences and thus constitute
complete alternative descriptions of the dynamic relations involving the observed vector time series $\mathbf{x}(n)$
and $\mathbf{w}(n)$, the innovations vector process, or its orthogonalized version $\boldsymbol{\zeta}(n)$, which summarize the stochastic novelty after stripping all mutual
correlations present in $\mathbf{x}(n)$.

DTF can be thought of as a forward description for it depicts how $\boldsymbol{\zeta}(n)$ affect the  $\mathbf{x}(n)$ observations, whereas PDC describes how $\mathbf{w}(n)$ relates to $\boldsymbol{\eta}(n)$ which is obtained by the mutual partialization of the components of $\mathbf{x}(n)$. Thus, $\eta_i(n)$ essentially excludes
those redundancies in $x_i(n)$ that can be attributed to the other $x_j(n)$ ($j\neq i$).
This redundancy extraction, by direct analogy with linear algebraic procedures gives rise to $\{\eta_j, j=1,\dots, K\}$ as a \textit{dual} (also frequently termed \textit{reciprocal}) basis to the $\{x_i, i=1,\dots, K\}$ basis as its elements $\eta_i$ are orthogonal to all $x_j$ for $i\neq j$. It is in this precise sense that PDC's description is dual to DTF's - they map the innovations onto dual representations of the observed dynamics.

A question that may come to mind is: how can DTF (PDC) being related to mutual information, a recognizedly reciprocal quantity, are able to describe unreciprocal aspects of the interaction between time series? The answer lies in that they relate the
$x_i$ ($\eta_i$) to innovations $\zeta_j$ ($w_j$) so that permuting $i$ and $j$ describes the relationship between distinct inner component subprocesses. As such, for example, in the case of PDC, $\MIR(w_i,\eta_j)$ (for $|\iota \pi_{ij}(\omega)|^2$) and $\MIR(w_j,\eta_i)$ (for $|\iota \pi_{ji}(\omega)|^2$) are not equal in general as opposed to  
\begin{equation}
\label{eq:mir_equal}
\MIR(w_i,\eta_j)=\MIR(\eta_j,w_i)
\end{equation}
whose equality always holds because $\ |\iota \pi_{ij}(\omega)|^2=|\iota \pi^*_{ij}(\omega)|^2$ where $^*$ denotes complex conjugation. In other words, index permutation in PDC entails comparing different underlying intrinsic component processes. A similar result holds for DTF.

Another point is why PDC/DTF are related to Gran\-ger causality. This is so because the inherent decorrelation \\$\E\left[w_i(n)w_j(m)\right]=0$ for all $i,j$ provided that $n\neq m$ introduces the necessary time asymetry to allow
their causal interpretations. Also observe that by definition of innovation, time asymetry is an automatic consiquence of
$w_i(n)$'s uncorrelation to $\eta_j(k)$ for $k<n$. The same holds for $\zeta_j(n)$ which is uncorrelated with
$x_i(k)$ for $k \leq n$.

Though left to the Appendix, the proof of Theorem \ref{theo:main_result} reveals an interesting aspect, namely eq. \eqref{eq:totprove}
that allows interpreting $\bar{A}_{ij}(\omega)$ \eqref{eq:acoef} as a transfer function from $\eta_j$ to $w_i$. This observation sheds light on  \cite{Schelter2009121}'s employment of a studentized version of $\bar{A}_{ij}(\omega)$ in characterizing the relationship between $\mathbf{x}(n)$ components. Similar observations hold for the  $\bar{H}_{ij}(\omega)$, whose 
magnitude has been used by \cite{Blinowska2010205}.

PDC and DTF are not alone as attempts to describe information flow between multivariate time series.To discuss these ideas one must also mention the efforts of \cite{Geweke82} and \cite{Hosoya91}. Though delving into detailed  and specific comparative aspects of their proposals vis-à-vis those described herein is beyond our intended scope and is plan\-ned for future publications, it is perhaps reassuring to note when just time series pairs are considered ($K=2$) all of the latter frequency domain measures coalesce into one and the same measure.

As a matter of fact, for $K=2$, it is possible to show that 
\begin{equation}
|\iota\pi_{ij}(\omega)|^2 = |\iota\gamma_{ij}(\omega)|^2=
1-e^{-\mbox{f}_{j \to i}(\omega)}=1-e^{-\mbox{M}_{j \to i}(\omega)}
\end{equation}
for ($i,j \in \{1,2\}$) where $\mbox{f}_{j \to i}(\omega)$ and $\mbox{M}_{j \to i}(\omega)$ describe respectively Ge\-we\-ke's and Hosoya's frequency domain causal measures in their own notation (the arrow shows the direction information flow). Furthermore, when it comes to testing for the null hypothesis of Granger causality when $K=2$, it is straightforward to verify the equivalence of the following statements:
\begin{enumerate}
\item There is no Granger causality from $x_j$ to $x_i$.
\item $\MIR(x_i, \zeta_j) = 0$.
\item $\MIR(w_i, \eta_j) = 0$.
\item $|\iota\pi_{ij}(\omega)|^2 = 0, \; \forall \omega \in [-\pi,\pi)$.
\item $|\iota\gamma_{ij}(\omega)|^2 = 0, \; \forall \omega \in [-\pi,\pi)$.
\item $\mbox{f}_{j \to i}(\omega) = 0, \; \forall \omega \in [-\pi,\pi)$.
\item $\mbox{M}_{j \to i}(\omega) = 0, \; \forall \omega \in [-\pi,\pi)$.
\item $\bar{A}_{ij}(\omega)=0, \; \forall \omega \in [-\pi,\pi)$.
\item $H_{ij}(\omega)=0, \; \forall \omega \in [-\pi,\pi)$.
\end{enumerate}

Which of the above statements is more convenient depends on criteria like knowledge of precise asymptotic statistics and test power. In fact, precise results this kind for the general $K>2$ case, that also include asymptotic
confidence intervals, are known for $|\iota\pi_{ij}(\omega)|^2$ and are being prepared for submission.

Though time domain considerations are strictly outside our scope, they are required to fully understand the
difference between the various measures \citep{Takahashi} and underlie the difficulties of generalizing Gewe\-ke's and
Hosoya's proposals to $K>2$ as attempted respectively in \cite{Geweke84} and \cite{Hosoya01} while keeping a consistent 
interpretation of information flow in association with Granger causality.

A summary of the relationships between the underlying processes addressed in this paper is portrayed in Figure \ref{fig:ships}.

\begin{figure}
\begin{center}
\includegraphics[width=10cm]{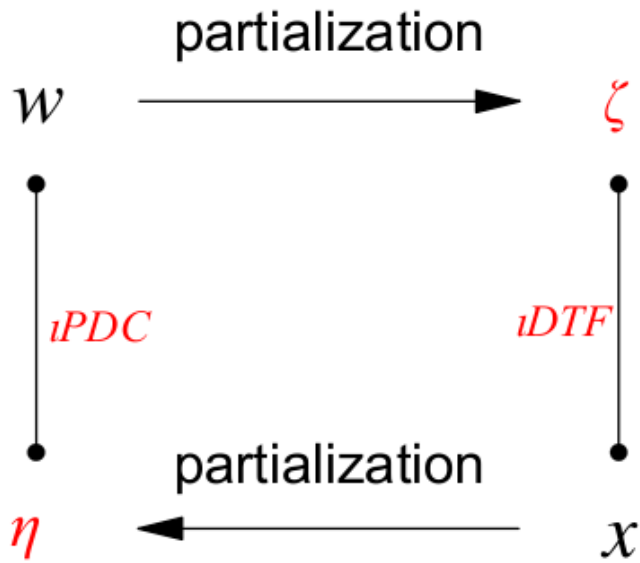}
\caption{The diagram summarizes the relationship between the various descriptive 
processes  associated with the original observations $\mathbf{x}$ including the
innovations process $\mathbf{w}$ and their respective partialized versions $\boldsymbol{\eta}$ and
$\boldsymbol{\zeta}$. The mutual information rate relationships are described by $\iota PDC$ for ($\boldsymbol{\eta}$,$\mathbf{w}$) and $\iota DTF$ for ($\boldsymbol{\zeta}$,$\mathbf{x}$) . Information
flow sources are indexed by the greek type face vector components and the information receiving structures are chosen
among the components of the latin type face processes.}
\label{fig:ships}
\end{center}
\end{figure}

Finally, it should be noted that iPDC, as herein defined, provides
an absolute signal scale invariant measure of direct connectivity strength
between observed time series as opposed to either PDC or gPDC that provide
only relative coupling assessments.

\section{Conclusion}
\label{sec:conclusion}

New properly weighted multivariate directed dependen\-ce meas\-ures between stochastic processes that generalize PDC and DTF 
have been introduced and their relationship to mutual information has been spelled out in terms of more fundamental
adequately partialized processes. These results enlighten the relationship of formerly available connectivity meas\-ures
and the notion of information flow. Theorem \ref{theo:main_result} is a novel result. For bivariate time series, results similar to Theorem  \ref{theo:main_result_DTF} have appeared several times in the literature in association with Geweke's measure of directed dependence \cite{Geweke82}. 
The $\iota$DTF introduced herein is novel and constitutes a proper generalization of Gewe\-ke's result for the multivariate setting while $\iota$PDC´s result (also novel) is its dual.

The present results not only introduce a unified fra\-me\-work to understand connectivity measures, but also open new generalization perspectives in nonlinear interaction cases for which information theory seems to be the natural study toolset.

\begin{appendix}

\section{Appendix}
\label{sec:proofs}
\subsection{Proof of Theorem \ref{theo:main_result} and Corollary \ref{cor:main}} \label{sec:prooftheo1}

Before proving Theorem \ref{theo:main_result} consider the following lemma:

\begin{lemma} \label{lemma:inv}
Let $\mathbf{S}_{\mathbf{x}}(\omega)$ be the power spectral density matrix of the stationary $K$-variate time series $\mathbf{x}(n)$ obeying \eqref{eq:model_law}. Then
\begin{equation}
\label{eq:inverse_partial_spec}
\mathbf{\bar{a}}_j(\omega)^H\mathbf{\Sigma}_\mathbf{w}^{-1}\mathbf{\bar{a}}_j(\omega) = \mathbf{S}_{\eta_{j}\eta_{j}}^{-1}(\omega)
\end{equation}
holds.
\end{lemma}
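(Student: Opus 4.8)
The plan is to express everything in terms of the autoregressive model structure and the known formula for the partial spectrum $S_{\eta_j\eta_j}(\omega)$. Recall that $\eta_j(n)=x_j(n)-\E[x_j(n)|\{x_l(m),\,l\neq j,\,m\in\Z\}]$ is the residual of projecting $x_j$ onto the closed span of the remaining coordinates, so $S_{\eta_j\eta_j}(\omega)$ is the $j$-th entry of the Schur complement arrangement, and it is a standard fact (see \citealp{Priestley81}) that $S_{\eta_j\eta_j}^{-1}(\omega)=\bigl[\mathbf{S}_\mathbf{x}^{-1}(\omega)\bigr]_{jj}$, i.e. the reciprocal of the partial spectrum equals the $j$-th diagonal entry of the inverse spectral density matrix. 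So the lemma will follow once I show
\begin{equation}
\label{eq:lemmatarget}
\bigl[\mathbf{S}_\mathbf{x}^{-1}(\omega)\bigr]_{jj}=\bar{\mathbf{a}}_j(\omega)^H\boldsymbol{\Sigma}_\mathbf{w}^{-1}\bar{\mathbf{a}}_j(\omega).
\end{equation}

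First I would record the frequency-domain form of the model \eqref{eq:model_law}: taking Fourier transforms, $\bar{\mathbf{A}}(\omega)\bar{\mathbf{x}}(\omega)=\bar{\mathbf{w}}(\omega)$, where $\bar{\mathbf{A}}(\omega)$ is the matrix with entries \eqref{eq:acoef}, and consequently the spectral factorization
\begin{equation*}
\mathbf{S}_\mathbf{x}(\omega)=\bar{\mathbf{A}}^{-1}(\omega)\,\boldsymbol{\Sigma}_\mathbf{w}\,\bar{\mathbf{A}}^{-H}(\omega),
\end{equation*}
which is valid under the stated invertibility/boundedness hypothesis from \citet{hannan70}. Inverting gives
\begin{equation*}
\mathbf{S}_\mathbf{x}^{-1}(\omega)=\bar{\mathbf{A}}^{H}(\omega)\,\boldsymbol{\Sigma}_\mathbf{w}^{-1}\,\bar{\mathbf{A}}(\omega).
\end{equation*}
Now observe that the $j$-th column of $\bar{\mathbf{A}}(\omega)$ is exactly $\bar{\mathbf{a}}_j(\omega)=[\bar{A}_{1j}(\omega)\ldots\bar{A}_{Kj}(\omega)]^T$, so the $(j,j)$ entry of the right-hand side is precisely $\bar{\mathbf{a}}_j(\omega)^H\boldsymbol{\Sigma}_\mathbf{w}^{-1}\bar{\mathbf{a}}_j(\omega)$, establishing \eqref{eq:lemmatarget}. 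Combining with $S_{\eta_j\eta_j}^{-1}(\omega)=[\mathbf{S}_\mathbf{x}^{-1}(\omega)]_{jj}$ finishes the proof.

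The one step that needs genuine care, rather than bookkeeping, is the identity $S_{\eta_j\eta_j}^{-1}(\omega)=[\mathbf{S}_\mathbf{x}^{-1}(\omega)]_{jj}$ — that the reciprocal of the partial spectrum is the corresponding diagonal entry of the inverse spectral matrix. This is the frequency-domain analogue of the linear-algebra fact that for a positive definite matrix $M$, the reciprocal of the Schur complement of the $(j,j)$ block equals $(M^{-1})_{jj}$; the subtlety is that $\eta_j$ is the residual from projecting onto \emph{both past and future} of the other series, so one must justify that the Wiener filter \eqref{eq:wienerfilter} indeed realizes the conditional expectation and that the resulting error spectrum is the Schur-complement expression displayed after Corollary \ref{cor:main}. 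I would cite \citet{Priestley81} for the partial spectrum formula and then reduce the matrix identity to the elementary block-inverse computation. Everything else is direct substitution, and care with Hermitian transposes (since $\bar{\mathbf{A}}(\omega)$ is complex) is the only place routine errors could creep in.
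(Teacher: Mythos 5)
Your proof is correct and follows essentially the same route as the paper's: both factor the inverse spectrum as $\mathbf{S}_\mathbf{x}^{-1}(\omega)=\bar{\mathbf{A}}^{H}(\omega)\boldsymbol{\Sigma}_\mathbf{w}^{-1}\bar{\mathbf{A}}(\omega)$, read off the $(j,j)$ entry as the quadratic form in the $j$-th column $\bar{\mathbf{a}}_j(\omega)$, and identify that diagonal entry with $S_{\eta_j\eta_j}^{-1}(\omega)$ via the partitioned-matrix (Schur complement) inversion formula. Your explicit flagging of the step that needs care --- justifying that the Wiener-filter residual spectrum is the Schur complement --- is a point the paper passes over with only a citation, so no discrepancy there.
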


\begin{proof}
One may rewrite \eqref{eq:acoef} in matrix form as
\begin{equation}
\mathbf{\bar{A}}(\omega)=\mathbf{I}-\sum_{l=1}^{+\infty}\mathbf{A}(l)e^{-\mathbf{j}\omega l},
\end{equation}
where $\mathbf{I}$ is the $K\times K$identity matrix.

Because \eqref{eq:model_law} holds,
the inverse of the $\mathbf{x}$ may be written as
\begin{equation}\label{eq:comp}
\mathbf{S}^{-1}_\mathbf{x}(\omega)=\mathbf{\bar{A}}(\omega)^H\mathbf{\Sigma}_\mathbf{w}^{-1}\mathbf{\bar{A}}(\omega).
\end{equation}
each of whose elements $[\cdot]_{jj}$
\begin{equation*}
\label{eq:inv_formula}
\left[\mathbf{S}^{-1}_\mathbf{x}(\omega)\right]_{jj}=\left(S_{x_jx_j}(\omega)
-\mathbf{s}_{x_j\mathbf{x}^j}(\omega)\mathbf{S}^{-1}_{\mathbf{x}^j\mathbf{x}^j}(\omega)\mathbf{s}_{\mathbf{x}^jx_j}(\omega)\right)^{-1},
\end{equation*}
follows from the partitioned matrix inversion formula (see \textit{e. g.} the appendix in \citet{Lutkepohl93}) which equals
\begin{equation}
\label{eq:direct_comp}
S_{\eta_{j}\eta_{j}}^{-1}(\omega)=\mathbf{\bar{a}}^H_j(\omega)\mathbf{\Sigma}_\mathbf{w}^{-1}\mathbf{\bar{a}}_j(\omega).
\end{equation}
by direct computation of $\left[\mathbf{S}_\mathbf{x}(\omega)^{-1}\right]_{jj}$ from \eqref{eq:comp}.
\end{proof}

Lemma \ref{lemma:inv} allows rewriting \eqref{eq:full_PDC} as
\begin{equation*}
\iota\pi_{ij}(\omega)= \bar{A}_{ij}(\omega)\sigma_{ii}^{-1/2}\sqrt{S_{\eta_{j}\eta_{j}}(\omega)}.
\end{equation*}

Hence to prove Theorem \ref{theo:main_result} all one must show is that
\begin{equation}\label{eq:equlproof}
\mbox{C}_{w_i\eta_j}(\omega)\buildrel\triangle\over =\frac{S_{w_i\eta_j}(\omega)}{\sqrt{S_{w_iw_i}(\omega)S_{\eta_j\eta_j}(\omega)}} = \bar{A}_{ij}(\omega)\sigma_{ii}^{-1/2}\sqrt{S_{\eta_{j}\eta_{j}}(\omega)}.
\end{equation}

Since
\begin{equation*}
S_{w_iw_i}(\omega)=\sigma_{ii},
\end{equation*}
proving \eqref{eq:equlproof} reduces to showing
\begin{equation} \label{eq:totprove}
 \bar{A}_{ij}(\omega) = \frac{S_{w_i\eta_j}(\omega)}{S_{\eta_j\eta_j}(\omega)}.
\end{equation}

By straightforward computation with help of \eqref{eq:wienerfilter}
\begin{equation*}
S_{w_i\eta_j}(\omega) = \sum_{l=1}^{K}\bar{A}_{il}(\omega)\left\{S_{x_lx_j}(\omega)-\mathbf{s}_{x_j \mathbf{x}^j}(\omega)\mathbf{S}^{-1}_{\mathbf{x}^j\mathbf{x}^j}(\omega)\mathbf{s}_{x_l\mathbf{x}^j}(\omega)\right\}.
\end{equation*}
whose right-hand side can be broken as
\begin{align} \label{eq:part1}
& \bar{A}_{ij}(\omega)S_{\eta_j\eta_j}(\omega) + \nonumber\\ 
& \sum_{\substack{l=1 \\ l\neq j}}^{K}\bar{A}_{il}(\omega)\left\{S_{x_lx_j}(\omega)-\mathbf{s}_{x_j \mathbf{x}^j}(\omega)\mathbf{S}^{-1}_{\mathbf{x}^j\mathbf{x}^j}(\omega)\mathbf{s}_{x_l\mathbf{x}^j}(\omega)\right\}.
\end{align}
which simplifies to
\begin{equation*}
S_{w_i\eta_j}(\omega) = \bar{A}_{ij}(\omega)S_{\eta_j\eta_j}(\omega)
\end{equation*}
as the partialized process $\eta_j$ is orthogonal to $\mathbf{x}^j$ by construction, i.e.
\begin{equation} \label{eq:part2}
S_{x_lx_j}(\omega)-\mathbf{s}_{x_j \mathbf{x}^j}(\omega)\mathbf{S}^{-1}_{\mathbf{x}^j\mathbf{x}^j}(\omega)\mathbf{s}_{x_l\mathbf{x}^j}(\omega) = 0, \;\; \forall \; l \neq j,\;\;  \omega \in [-\pi,\pi).
\end{equation}
thereby concluding the proof.

When $w_i$ and $\eta_j$ are stationary Gaussian, Corollary \ref{cor:main} is a direct consequence of applying the following
\begin{theorem}[Gelfand and Yaglom,\citeyear{Gelfand59}] \label{theo:gelfand}
Let $x$ and $y$ be jointly Gaussian stationary time series. Assume that $$\E[w_x(n)w_y(n)]^2 < \E[w_x^2(n)] \E[w_y^2(n)],$$
where $w_x(n)$ and $w_y(n)$ are the innovations associated to $x$ and $y$. Then the following equality holds:
\begin{equation}
\MIR(x,y)=-\frac{1}{4\pi}\int^{\pi}_{-\pi}\log (1-|\mbox{C}_{xy}(\omega)|^2)d\omega,
\end{equation}
when $w_i$ and $\eta_j$ are both jointly stationary Gaussian.
\end{theorem}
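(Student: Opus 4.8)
This is the classical Gelfand--Yaglom identity, so the plan is to reconstruct its standard proof: turn the defining limit \eqref{eq:mutual_info} into a limit of Gaussian determinant ratios and evaluate it with Szeg\H{o}'s limit theorem. First I would note that for the jointly Gaussian block $(x(1),\dots,x(m),y(1),\dots,y(m))$ the Radon--Nikodym derivative in \eqref{eq:mutual_info} is a ratio of Gaussian densities, so the expectation inside the limit is just the ordinary finite mutual information
\[
 I_m:=I\big(x(1),\dots,x(m);\,y(1),\dots,y(m)\big)=\tfrac12\log\frac{\det\Sigma^{x}_m\,\det\Sigma^{y}_m}{\det\Sigma^{xy}_m},
\]
where $\Sigma^{x}_m$ and $\Sigma^{y}_m$ are the $m\times m$ Toeplitz autocovariance matrices of $x$ and $y$ and $\Sigma^{xy}_m$ is the $2m\times2m$ covariance matrix of the stacked vector; after permuting coordinates so that the $x$- and $y$-samples interleave, $\Sigma^{xy}_m$ becomes the block-Toeplitz matrix generated by the $2\times2$ spectral density matrix
\[
 \mathbf S(\omega)=\begin{bmatrix} S_{xx}(\omega) & S_{xy}(\omega)\\ S_{yx}(\omega) & S_{yy}(\omega)\end{bmatrix},
\]
and its determinant is unchanged by the permutation. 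The hypothesis $\E[w_x(n)w_y(n)]^2<\E[w_x^2(n)]\E[w_y^2(n)]$ is a strict Cauchy--Schwarz inequality on the one-step innovations: it forces $\E[w_x^2(n)]>0$ and $\E[w_y^2(n)]>0$, so both marginals are purely nondeterministic (hence $\log S_{xx},\log S_{yy}\in L^1[-\pi,\pi]$ and each $\Sigma^{x}_m,\Sigma^{y}_m$ is nonsingular), and it rules out the two innovation streams being perfectly instantaneously correlated, so that the determinant ratio above is well defined and the joint law is mutually absolutely continuous with respect to the product of the marginals.

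The second step is to divide by $m+1$ (equivalently by $m$) and let $m\to\infty$, applying Szeg\H{o}'s first limit theorem to each scalar Toeplitz sequence and its block version to $\Sigma^{xy}_m$:
\begin{gather*}
 \tfrac1m\log\det\Sigma^{x}_m\longrightarrow\tfrac1{2\pi}\int_{-\pi}^{\pi}\log S_{xx}(\omega)\,\d\omega,\\
 \tfrac1m\log\det\Sigma^{y}_m\longrightarrow\tfrac1{2\pi}\int_{-\pi}^{\pi}\log S_{yy}(\omega)\,\d\omega,\\
 \tfrac1m\log\det\Sigma^{xy}_m\longrightarrow\tfrac1{2\pi}\int_{-\pi}^{\pi}\log\det\mathbf S(\omega)\,\d\omega.
\end{gather*}
Hence
\[
 \MIR(x,y)=\frac1{4\pi}\int_{-\pi}^{\pi}\log\frac{S_{xx}(\omega)S_{yy}(\omega)}{\det\mathbf S(\omega)}\,\d\omega.
\]

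The last step is algebraic: $\det\mathbf S(\omega)=S_{xx}(\omega)S_{yy}(\omega)-|S_{xy}(\omega)|^2=S_{xx}(\omega)S_{yy}(\omega)\big(1-|\mbox{C}_{xy}(\omega)|^2\big)$, so the argument of the logarithm equals $\big(1-|\mbox{C}_{xy}(\omega)|^2\big)^{-1}$ and one obtains exactly $-\frac1{4\pi}\int_{-\pi}^{\pi}\log(1-|\mbox{C}_{xy}(\omega)|^2)\,\d\omega$. I expect the substantive work to lie entirely in the second step: one must verify the hypotheses of the block Szeg\H{o} theorem --- essentially integrability of $\log\det\mathbf S$, which is immediate in the bounded, boundedly-invertible spectral setting used elsewhere in the paper but in general is precisely where the non-degeneracy hypothesis must enter (and where, in the borderline case $|\mbox{C}_{xy}(\omega)|=1$ on a set of positive measure, the identity is read in the extended sense with both sides $+\infty$) --- and one must justify that the $\frac1{m+1}$-normalized expectation in \eqref{eq:mutual_info} really converges to this common Szeg\H{o} limit. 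The remaining ingredients --- the Gaussian determinant formula for $I_m$, the coordinate permutation, and the factorization of $\det\mathbf S$ --- are routine.
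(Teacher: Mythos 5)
The paper does not actually prove this statement: Theorem~\ref{theo:gelfand} is quoted as a classical result of Gelfand and Yaglom and is used only as a black box to pass from Theorems~\ref{theo:main_result} and \ref{theo:main_result_DTF} to Corollaries~\ref{cor:main} and \ref{cor:mainDTF}. Your reconstruction is the standard textbook route to that result and its skeleton is correct: the finite-block quantity in \eqref{eq:mutual_info} is exactly the Gaussian mutual information $I_m=\tfrac12\log\bigl(\det\Sigma^x_m\det\Sigma^y_m/\det\Sigma^{xy}_m\bigr)$; the interleaving permutation turns $\Sigma^{xy}_m$ into the block-Toeplitz matrix with symbol $\mathbf S(\omega)$ without changing the determinant; the scalar and block first Szeg\H{o} limit theorems give the three spectral integrals; and $\det\mathbf S=S_{xx}S_{yy}(1-|\mbox{C}_{xy}|^2)$ collapses the result to the stated coherence integral (the $\tfrac1{m+1}$ versus $\tfrac1m$ normalization is asymptotically immaterial, as you note). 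You have also correctly located where the hypothesis on the innovations enters: strict inequality in $\E[w_xw_y]^2<\E[w_x^2]\E[w_y^2]$ is what keeps $\tfrac1{2\pi}\int\log\det\mathbf S\,\d\omega>-\infty$ (via the multivariate Szeg\H{o}--Kolmogorov identity $\det\E[\mathbf w\mathbf w^T]=\exp\bigl(\tfrac1{2\pi}\int\log\det\mathbf S\,\d\omega\bigr)$ applied to the joint innovation covariance), hence keeps the limit finite and the joint law absolutely continuous with respect to the product of marginals. The only substantive items left open in your plan are the ones you flag yourself --- verifying the hypotheses of the block Szeg\H{o} theorem for the joint spectrum and justifying the interchange of limit and normalization --- and in the setting this paper actually needs (spectral density bounded above and below, as assumed for \eqref{eq:model_law}) both are immediate. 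So the proposal is a sound, essentially complete proof of a statement the paper delegates to the literature.
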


\subsection{Proof of Theorem \ref{theo:main_result_DTF} and Corollary \ref{cor:mainDTF}} \label{sec:prooftheo2}

To prove Theorem \ref{theo:main_result_DTF} recall that by definition
\begin{equation*}
C_{x_i\zeta_j}(\omega)=\frac{S_{x_i\zeta_j}(\omega)}{\sqrt{S_{x_ix_i}(\omega)S_{\zeta_j\zeta_j}(\omega)}}.
\end{equation*}
and that
\begin{equation*}
S_{\zeta_j\zeta_j}(\omega)=\rho_{jj}.
\end{equation*}

Therefore, it suffices to show that
\begin{equation} 
\dfrac{\bar{H}_{ij}(\omega)}{\sqrt{{\bf \bar{h}}^H_j(\omega)
\boldsymbol{\Sigma}_\mathbf{w}{\bf \bar{h}}_j(\omega)}} = \frac{S_{x_i\zeta_j}(\omega)}{\sqrt{S_{x_ix_i}(\omega)}}.
\end{equation}

By the existence of the moving average representation \eqref{eq:model_law2}
\begin{equation}
S_{x_ix_i}(\omega)= {\bf \bar{h}}^H_j(\omega)
\boldsymbol{\Sigma}_\mathbf{w}{\bf \bar{h}}_j(\omega).
\end{equation}

Also, by the existence of moving average representation \eqref{eq:model_law2} and the orthogonality of the partialized innovation process $\zeta_j$ with respect to the innovations $w_l, \; l\neq j$, it follows that
\begin{equation*}
S_{x_i\zeta_j}(\omega) = \bar{H}_{ij}(\omega).
\end{equation*}
and this concludes the proof.

Corollary \ref{cor:mainDTF} follows immediately from Theorems  \ref{theo:main_result_DTF} and \ref{theo:gelfand}.

\end{appendix}

\section{ACKNOWLEDGMENTS}

The authors gratefully acknowledge support from the FAPESP/CINAPCE 2005/56464-9 Grant.
D.Y.T. to CAPES Grant and FAPESP Grant 2008/08171-0.
L.A.B  to CNPq Grants 306964/2006-6 and 304404/2009-8 
K.S. to CNPq Grant

\bibliographystyle{plainnat} 
\bibliography{ba} 

\begin{thebibliography}{21}
\providecommand{\natexlab}[1]{#1}
\providecommand{\url}[1]{\texttt{#1}}
\expandafter\ifx\csname urlstyle\endcsname\relax
  \providecommand{\doi}[1]{doi: #1}\else
  \providecommand{\doi}{doi: \begingroup \urlstyle{rm}\Url}\fi

\bibitem[Astolfi et~al.(2007)Astolfi, Cincotti, Mattia, Marciani, Baccal\'a,
  Fallani, Salinari, Ursino, Zavaglia, Ding, Edgar, Miller, He, and
  Babiloni]{Astolfi2007}
Laura Astolfi, Fabio Cincotti, D.~Mattia, M.~G. Marciani, Luiz~A. Baccal\'a,
  F.~D.~V. Fallani, S.~Salinari, M.~Ursino, M.~Zavaglia, L.~Ding, J.~C. Edgar,
  G.~A. Miller, B.~He, and F.~Babiloni.
\newblock Comparison of different cortical connectivity estimators for
  high-resolution {EEG} recordings.
\newblock \emph{Human Brain Mapping}, 28:\penalty0 143--157, 2007.

\bibitem[Baccal\'a and Sameshima(2001{\natexlab{a}})]{BaccaSame01}
L.~A. Baccal\'a and K.~Sameshima.
\newblock Partial directed coherence: A new concept in neural structure
  determination.
\newblock \emph{Biological Cybernetics}, pages 463--474, 2001{\natexlab{a}}.

\bibitem[Baccal\'a et~al.(2006)Baccal\'a, Takahashi, and Sameshima]{Baccala06}
L.~A. Baccal\'a, D.~Y. Takahashi, and K.~Sameshima.
\newblock \emph{Handbook of Time Series Analysis}, chapter Computer intensive
  testing for the influence between time-series, pages 411--435.
\newblock Wiley-VCH, 2006.

\bibitem[Baccal\'a et~al.(2007)Baccal\'a, Takahashi, and Sameshima]{Baccala07}
L.~A. Baccal\'a, D.~Y. Takahashi, and K.~Sameshima.
\newblock Generalized partial directed coherence.
\newblock In \emph{Cardiff Proceedings of the 2007 15th International
  Conference on Digital Signal Processing (DSP2007)}, pages 162--166, 2007.

\bibitem[Baccal\'a and Sameshima(2001{\natexlab{b}})]{baccalapbr}
Luiz~A. Baccal\'a and Koichi Sameshima.
\newblock Overcoming the limitations of correlation analysis for many
  simultaneously processed neural structures.
\newblock \emph{Progress in Brain Research}, 130\penalty0 (Advances in Neural
  Population Coding):\penalty0 33--47, 2001{\natexlab{b}}.

\bibitem[Baccalá et~al.(1999)Baccalá, Sameshima, Ballester, Valle, and
  Timo-Iaria]{Baccala99}
L.~A. Baccalá, K~Sameshima, G.~Ballester, A.~C. Valle, and C.~Timo-Iaria.
\newblock Studying the interaction between brain structures via directed
  coherence and {G}ranger causality.
\newblock \emph{Applied Signal Processing}, 5:\penalty0 40--48, 1999.

\bibitem[Blinowska et~al.(2010)Blinowska, Kus, Kaminski, and
  Janiszewska]{Blinowska2010205}
K.~Blinowska, R.~Kus, M.~Kaminski, and J.~Janiszewska.
\newblock Transmission of brain activity during cognitive task.
\newblock \emph{Brain Topography}, 23\penalty0 (2):\penalty0 205--213, 2010.

\bibitem[Gelfand and Yaglom(1959)]{Gelfand59}
I.~M. Gelfand and A.~M. Yaglom.
\newblock Calculation of amount of information about a random function
  contained in another such function.
\newblock \emph{American Mathematical Society Translation Series}, 2:\penalty0
  3--52, 1959.

\bibitem[Geweke(1982)]{Geweke82}
J.~F. Geweke.
\newblock Measurement of linear dependence and feedback between multiple time
  series.
\newblock \emph{Journal of the American Statistical Association,}, 77:\penalty0
  304--313, 1982.

\bibitem[Geweke(1984)]{Geweke84}
J.~F. Geweke.
\newblock Measures of conditional linear dependence and feedback between time
  series.
\newblock \emph{Journal of the American Statistical Association}, 79:\penalty0
  907--915, 1984.

\bibitem[Granger(1969)]{Granger69}
C.~W.~J. Granger.
\newblock Investigating causal relation by econometric models and
  cross-spectral methods.
\newblock \emph{Econometrica}, 37:\penalty0 424--438, 1969.

\bibitem[Hannan(1970)]{hannan70}
E.~J. Hannan.
\newblock \emph{Multiple Time Series}.
\newblock John Wiley \& Sons Inc.: New York, 1970.

\bibitem[Hosoya(1991)]{Hosoya91}
Y.~Hosoya.
\newblock The decomposition and measurement of the interdependency between
  second-order stationary processes.
\newblock \emph{Probability Theory and Related Fields}, 88:\penalty0 429--444,
  1991.

\bibitem[Hosoya(2001)]{Hosoya01}
Y.~Hosoya.
\newblock Elimination of third-series effect and defining partial measures of
  causality.
\newblock \emph{Journal of Time Series Analysis}, 22:\penalty0 537--554, 2001.

\bibitem[Kaminski and Blinowska(1991)]{Kaminski91}
M.J Kaminski and K.J. Blinowska.
\newblock A new method of the description of the information flow in the brain
  structures.
\newblock \emph{Biological Cybernetics}, 65:\penalty0 203--210, 1991.

\bibitem[L\"utkepohl(1993)]{Lutkepohl93}
H.~L\"utkepohl.
\newblock \emph{Introduction to Multiple Time Series Analysis}.
\newblock Springer-Verlag: Berlin, 1993.

\bibitem[Priestley(1981)]{Priestley81}
M.~B. Priestley.
\newblock \emph{Spectral Analysis and Time Series}.
\newblock Academic Press, London, 1981.

\bibitem[Schelter et~al.(2009)Schelter, Timmer, and Eichler]{Schelter2009121}
Björn Schelter, Jens Timmer, and Michael Eichler.
\newblock Assessing the strength of directed influences among neural signals
  using renormalized partial directed coherence.
\newblock \emph{Journal of Neuroscience Methods}, pages 121--130, 2009.
\newblock ISSN 0165-0270.

\bibitem[Sommer and Wichert(2003)]{Sommer2003}
Friedrich~T. Sommer and Andrzej Wichert.
\newblock \emph{Exploratory Analysis and Data Modeling in Functional
  Neuroimaging}.
\newblock MIT Press, Cambridge, 2003.

\bibitem[Takahashi(2009)]{Takahashi}
Daniel~Y. Takahashi.
\newblock \emph{{Medidas de Fluxo de Informa\c c\~ao com Aplica\-\c c\~ao em
  Neuroci\^encia}}.
\newblock PhD thesis, {Programa Interunidades de P\'os-gradua\-\c c\~ao em
  Bioinform\'atica da Universidade de S\~ao Paulo}, 2009.
\newblock {K. Sameshima and L. A. Baccal\'a (Advisors)}.

\bibitem[Takahashi et~al.(2010)Takahashi, Baccal\'a, and
  Sameshima]{BuenosAires2010}
Daniel~Y. Takahashi, Luiz~A. Baccal\'a, and Koichi Sameshima.
\newblock Frequency domain connectivity: an information theoretic perspective.
\newblock In \emph{"Proceedings of the 32nd Annual International Conference of
  the IEEE Engineering in Medicine and Biology Society, Buenos Aires"}, pages
  1726--1729. IEEE Press, 2010.

\end{thebibliography}

\end{document}